\numberwithin{equation}{section}
\DeclareSymbolFont{lettersA}{U}{txmia}{m}{it}
\DeclareMathSymbol{\m@thbbch@rA}{\mathord}{lettersA}{129}
\DeclareMathSymbol{\m@thbbch@rB}{\mathord}{lettersA}{130}
\DeclareMathSymbol{\m@thbbch@rC}{\mathord}{lettersA}{131}
\DeclareMathSymbol{\m@thbbch@rD}{\mathord}{lettersA}{132}
\DeclareMathSymbol{\m@thbbch@rE}{\mathord}{lettersA}{133}
\DeclareMathSymbol{\m@thbbch@rF}{\mathord}{lettersA}{134}
\DeclareMathSymbol{\m@thbbch@rG}{\mathord}{lettersA}{135}
\DeclareMathSymbol{\m@thbbch@rH}{\mathord}{lettersA}{136}
\DeclareMathSymbol{\m@thbbch@rI}{\mathord}{lettersA}{137}
\DeclareMathSymbol{\m@thbbch@rJ}{\mathord}{lettersA}{138}
\DeclareMathSymbol{\m@thbbch@rK}{\mathord}{lettersA}{139}
\DeclareMathSymbol{\m@thbbch@rL}{\mathord}{lettersA}{140}
\DeclareMathSymbol{\m@thbbch@rM}{\mathord}{lettersA}{141}
\DeclareMathSymbol{\m@thbbch@rN}{\mathord}{lettersA}{142}
\DeclareMathSymbol{\m@thbbch@rO}{\mathord}{lettersA}{143}
\DeclareMathSymbol{\m@thbbch@rP}{\mathord}{lettersA}{144}
\DeclareMathSymbol{\m@thbbch@rQ}{\mathord}{lettersA}{145}
\DeclareMathSymbol{\m@thbbch@rR}{\mathord}{lettersA}{146}
\DeclareMathSymbol{\m@thbbch@rS}{\mathord}{lettersA}{147}
\DeclareMathSymbol{\m@thbbch@rT}{\mathord}{lettersA}{148}
\DeclareMathSymbol{\m@thbbch@rU}{\mathord}{lettersA}{149}
\DeclareMathSymbol{\m@thbbch@rV}{\mathord}{lettersA}{150}
\DeclareMathSymbol{\m@thbbch@rW}{\mathord}{lettersA}{151}
\DeclareMathSymbol{\m@thbbch@rX}{\mathord}{lettersA}{152}
\DeclareMathSymbol{\m@thbbch@rY}{\mathord}{lettersA}{153}
\DeclareMathSymbol{\m@thbbch@rZ}{\mathord}{lettersA}{154}
\long\def\DoLongFutureLet #1#2#3#4{%
   \def\@FutureLetDecide{#1#2\@FutureLetToken
      \def\@FutureLetNext{#3}\else
      \def\@FutureLetNext{#4}\fi\@FutureLetNext}
   \futurelet\@FutureLetToken\@FutureLetDecide}
\def\DoFutureLet #1#2#3#4{\DoLongFutureLet{#1}{#2}{#3}{#4}}
\def\@EachCharacter{\DoFutureLet{\ifx}{\@EndEachCharacter}%
   {\@EachCharacterDone}{\@PickUpTheCharacter}}
\def\m@keCharacter#1{\csname\F@ntPrefix#1\endcsname}
\def\@PickUpTheCharacter#1{\m@keCharacter{#1}\@EachCharacter}
\def\@EachCharacterDone \@EndEachCharacter{}
\DeclareRobustCommand*{\varmathbb}[1]{\gdef\F@ntPrefix{m@thbbch@r}%
  \@EachCharacter #1\@EndEachCharacter}
\newtheorem{theorem}{Theorem}[section]
\newtheorem{lemma}[theorem]{Lemma}
\newtheoremstyle{mytheoremstyle} 
    {1em plus .2em minus .1em}                    
    {1em plus .2em minus .1em}                    
    {\rmfamily}                   
    {}                           
    {\bfseries}                   
    {.}                          
    {.5em}                       
    {}  
\theoremstyle{mytheoremstyle}
\newcommand\iffdef{\;\mathrel{\mathord{:}\mathord{\longleftrightarrow}}\;}
\DeclareMathOperator{\Int}{Int}
\DeclareMathOperator{\RCvtex}{RC}
\DeclareMathOperator{\ROvtex}{RO}
\newcommand{\con}{\mathrel{\mathsf{C}}}
\newcommand\mathbackslash{\raisebox{.4pt}{\texttt{/}}}
\def\notcon{
  \renewcommand\stacktype{L}\mathrel{\ensurestackMath{%
  \ThisStyle{\stackon[0pt]{\SavedStyle\con}{\SavedStyle\mathbackslash}}}}%
}
\title[On Nondefinability of Interior-Connectedness]{On Nondefinability of Interior-Connectedness\break via the Contact Relation}
\author[]{Rafa\l\ Gruszczy\'nski and Paula Mench\'{o}n}
\date{}
\address{Rafa\l\ Gruszczy\'nski, \textsc{Orcid:} 0000-0002-3379-0577\\
Paula Mench\'on, \textsc{Orcid:} 0000-0002-9395-107X\\
Department of Logic\\
Institute of Philosophy\\
Nicolaus Copernicus University in Toru\'n\\
Poland}
\email{gruszka@umk.pl, mpmenchon@nucompa.exa.unicen.edu.ar}
\begin{document}

\noindent \footnotesize{This is AAM of the paper published in Notre Dame J. Formal Logic 67(2): 291--303 (May 2026). DOI: 10.1215/00294527-2026-0003.}
\vspace{1cm}

\begin{abstract}
\begin{sloppypar}
This short paper is a small contribution to the field of Boolean Contact Algebras. We analyze the non-definability of the property of \emph{interior-connectedness}, and we prove certain minimality conditions for algebras and spaces that can be used in demonstrating that the aforementioned property cannot be expressed by means of contact within regular closed algebras.

\smallskip

\noindent Keywords: Boolean Contact Algebras, binary relations, non-definability, Padoa's method, interior-connectedness, internal connectedness, spatial logic
\end{sloppypar}

\smallskip

\noindent MSC 2020: 03G05, 03C07

\end{abstract}

\maketitle

\section{Introduction}
\label{sec1}

Boolean contact algebras (BCAs) are an algebraic framework for formalizing
spatial relations between objects that are often thought of as models of
natural phenomena, such as bodies or regions, understood as chunks of space.
It has been proved that they are essentially expansions of algebras of
regular closed (or open) subsets of topological spaces
(see \cite{Dimov-et-al-CARBTSPA1}; D\"{u}ntsch and Winter \cite{Duntsch-et-al-RTBCA}) and that their language
is strong enough to express various topological properties such as connectedness,
weak regularity, or clopenness (see Bennett and D\"{u}ntsch \cite{Bennett-Duntsch-AAT}).

Recall that a BCA is a pair
$\mathfrak{B}\coloneqq   \langle B,\mathrel{\mathsf{C}}
\rangle $, where $B$ is a Boolean algebra and $\mathrel{\mathsf{C}}$ is
a binary \emph{contact} relation on $B$ that satisfies the following five
axioms (where $+$ is the join operation, $\leq $ is the standard Boolean
order, and $\notcon $ is the complement of $\mathrel{\mathsf{C}}$):
%
\begin{gather}
(\forall x\in B)\,\mathbf{0}\notcon x, \tag{C0} \label{eqC0}
\\
(\forall x\in B)\,(x\neq \mathbf{0}\rightarrow x\mathrel{\mathsf{C}}x) ,
\tag{C1} \label{eqC1}
\\
(\forall x,y\in B)\,(x\mathrel{\mathsf{C}}y\rightarrow y\mathrel{ \mathsf{C}}x),
\tag{C2} \label{eqC2}
\\
(\forall x,y,z\in B)\,(x\mathrel{\mathsf{C}}y\wedge y\leq z \rightarrow x
\mathrel{\mathsf{C}}z), \tag{C3} \label{eqC3}
\\
(\forall x,y,z\in B)\,(x\mathrel{\mathsf{C}}y+x\rightarrow x\mathrel{
\mathsf{C}}y\vee x\mathrel{\mathsf{C}}z). \tag{C4} \label{C4} 
\end{gather}
These form the basic axioms for BCAs that are often extended with additional
constraints.

Let $  \langle X,\tau   \rangle $ be a topological space, where
$\tau $ is a family of open sets in $X$. For $A\subseteq X$,
$\overline{A}$ is its closure, and $\Int A$ is its interior. $A$ is a
\emph{regular closed} subset of $X$ if it is a fixed point of the composition
of closure and interior operations: $A=\overline{\Int A}$. Accordingly,
it is \emph{regular open} if it is a fixed point of the composition of interior
and closure: $A=\Int \overline{A}$. Both compositions are called
\emph{regularization} operations. $\RCvtex (\tau )$ (resp.
$\ROvtex (\tau )$) is the complete algebra of regular closed (resp. regular
open) subsets of $X$. The meet and complement of $\RCvtex (\tau )$ are defined
as, respectively, the regularization of the intersection and the closure
of the complement in the power set algebra $2^{X}$
\begin{equation*}
A\cdot D\coloneqq \overline{\Int (A\cap D)}\qquad {-}A\coloneqq \overline{X
\setminus A},
\end{equation*}
while the join is the standard set-theoretical union.

The canonical interpretation of BCAs treats $\mathfrak{B}$ as a subalgebra of
$\RCvtex (\tau )$, with the contact relation interpreted in the following way:
\begin{equation*}
A\mathrel{\mathsf{C}}_{\tau} D\iffdef A\cap D\neq \emptyset .
\end{equation*}
On the right, we have the set-theoretical intersection, which---in general---is
different from the meet operation of regular closed algebras. As is well known,
the contact operation lets us differentiate between two situations: when
two regions are disjoint and <<touch>> each other, and when two regions
are disjoint and <<separated>>, the distinction which is beyond the expressive
power of pure Boolean algebras. Thus, regular closed sets $A$ and
$B$ are in contact when they share at least one point, though they may
be disjoint in the sense of the meet operation of $\RCvtex (\tau )$.

Every nondegenerate (i.e., having at least two elements) Boolean algebra
$B$ carries two extreme contact relations, the \emph{minimal} that is just
the overlap relation
\begin{equation*}
x\mathrel{\mathsf{C}}y\iffdef x\cdot y\neq \mathbf{0},
\end{equation*}
and the \emph{maximal}, where any two nonzero regions are in contact
\begin{equation*}
x\mathrel{\mathsf{C}}y\iffdef x\neq \mathbf{0}\neq y.
\end{equation*}

Two contact algebras
$\mathfrak{B}_{1}\coloneqq   \langle B_{1},\mathrel{\mathsf{C}}_{1}
  \rangle $ and
$\mathfrak{B}_{2}\coloneqq   \langle B_{2},\mathrel{\mathsf{C}}_{2}
  \rangle $ are \emph{isomorphic} if there exists a~Boolean isomorphism
$f\colon B_{1}\to B_{2}$ that preserves and reflects the contact relation
\begin{equation*}
x\mathrel{\mathsf{C}}_{1} y\longleftrightarrow h(x)\mathrel{
\mathsf{C}}_{2} h(y).
\end{equation*}
Such an isomorphism is called a \emph{BCA-isomorphism}.

The representation theorems from \cite{Duntsch-et-al-RTBCA} and
\cite{Dimov-et-al-CARBTSPA1} show that all BCAs are (isomorphic to) subalgebras
of regular closed (equivalently: open) subsets of certain topological spaces.
Goldblatt and Grice \cite{Goldblatt-et-al-MADFMS} have shown that
the category of BCAs with contact reflecting Boolean homomorphisms is dually equivalent
to the category of mereotopological spaces and mereomorphisms.

Since the dawn of modern region-based investigations into the features
of space, their practitioners have delved into the problem of the expressive
power of languages with the binary predicate~$\mathrel{\mathsf{C}}$
(Pratt and Schoop \cite{Pratt-Schoop-EIPPM};
Bennett and D\"{u}ntsch \cite{Bennett-Duntsch-AAT};
Vakarelov \cite{Vakarelov-RBTS}). Parallel and
complementary investigations (Cohn et al. \cite{Cohn-et-al-QSRARQTRCC};
Pratt and Lemon \cite{Pratt-Lemon-OPPM}; Kontchakov et~al. \cite{Kontchakov-et-al-TCAML};
Pratt and Schoop \cite{Pratt-Schoop-EIPPM}; Pratt-Hartmann \cite{Pratt-Hartmann-EARIRBTOS};
Kontchakov et~al. \cite{Kontchakov-et-al-SLWCP}) tackled the problem of languages whose
signature contains unary predicates $c$ and $c^{\circ}$ (either the former
or both) interpreted topologically in the following way:
\begin{align*}
c(x)\qquad &\text{iff}\qquad \text{$x$ is a connected space,}
\\
{c^{\circ}}(x)\qquad &\text{iff}\qquad \text{the interior of $x$ is a connected space.}
\end{align*}
The second property bears the name of \emph{interior-connectedness} (or
\emph{internal connectedness}, as in the case of Ivanova's paper \cite{Ivanova-ECAATC}). Below,
we will use the former. Specifically, we say that a subset $A$ of a~topological
space $  \langle X,\tau   \rangle $ is
\emph{interior-connected} if its interior is a~connected subspace of
$X$.

Connectedness can be fairly easily defined by means of the contact in the
setting of BCAs via the following constraint
\begin{equation*}
c(x)\iffdef \bigl(\forall y,z\in B\setminus \{\mathbf{0}\}\bigr)\,(x=y+z
\rightarrow y\mathrel{\mathsf{C}}z).
\end{equation*}
In effect, in the regular closed algebras, for any topological space
$  \langle X,\tau   \rangle $ and any regular closed subset
$A$ of $X$ we obtain that
\begin{equation*}
\RCvtex (\tau )\vDash c(A)\qquad \text{iff}\qquad A\ \text{is connected.}
\end{equation*}
However, it is natural that the expressive power of the language of BCAs
must be topologically limited and that although many topological properties
can be captured with the contact relation, there will be such that cannot
be expressed in terms of it.

Interior-connectedness serves as an example, as was proved by Ivanova
\cite[Proposition~2.1]{Ivanova-ECAATC}. In her proof, Ivanova used Padoa's
method (see, e.g., Beth \cite{Beth-OPMITTOF}) and produced isomorphic contact
algebras $\mathfrak{B}_{1}$ and $\mathfrak{B}_{2}$ such that the first
has a regular closed set $A$ whose interior is connected but whose counterpart
in $\mathfrak{B}_{2}$ does not enjoy this property. The construction is
based on a topological space with seven points whose topology has certain
symmetries. It was our impression that the phenomenon of nondefinability
of the property of interior-connectedness can be analyzed in a~graph-based
setting by means of shapes and colors that enhance topological intuition
and understanding of the proof. In effect, we managed to simplify the proof
slightly and show that the proof must be minimal in the sense of the cardinalities
of both the spaces involved and their regular closed algebras.

Having said that, we want to emphasize that we do not use the graph topology
(see Hatcher \cite{Hatcher-AT}) in our investigations. The graphs with which
we represent spaces are heuristic devices, yet they are not deprived of
precise meaning that we of course explain.

In the sequel, we present our findings.

\section{The Proof}
\label{sec2}

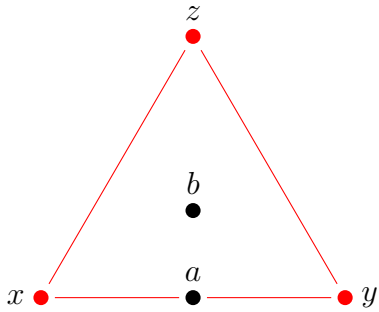
\begin{figure}
\centering
\begin{tikzpicture}
    \coordinate (x) at (0, 0);
    \coordinate (y) at (4, 0);
    \coordinate (z) at ($(x)!0.5!(y) + (0, {2 * sqrt(3)})$);

    \coordinate (a) at ($(x)!0.5!(y)$); 
    \coordinate (yz) at ($(y)!0.5!(z)$); 

    \coordinate (Center) at ($(x)!2/3!(yz)$);

    \node[circle, fill=red,inner sep=2pt] at (x) {};
    \node[below=2pt] at (x) {$x$};
    \node[circle, fill=red,inner sep=2pt] at (y) {};
    \node[below=2pt] at (y) {$y$};
    \node[circle, fill=red,inner sep=2pt] at (z) {};
    \node[left=2pt] at (z) {$z$};

    \node[circle, fill,inner sep=2pt] at (a) {};

    \node[circle, fill,inner sep=2pt] at (Center) {};

    \node[below=2pt] at (a) {$a$};
    \node[above=2pt] at (Center) {$b$};


    \node[inner sep=5pt] (fakex) at (x) {};
    \node[inner sep=5pt] (fakey) at (y) {};
    \node[inner sep=5pt] (fakez) at (z) {};
    \node[inner sep=5pt] (fakea) at (a) {};

    \draw [red] (fakex) -- (fakea) -- (fakey) -- (fakez) -- (fakex);

    \path (fakex) edge [loop left,red] (fakex) (fakey) edge [loop right,red] (fakey) (fakez) edge [loop above,red] (fakez);

\end{tikzpicture}\caption{The five-point space.}\label{fig:5-point}
\end{figure}

Let us consider a~graph with five elements
$T\coloneqq \{x,y,z,a,b\}$, where $x$, $y$, and $z$ are distinguished nodes and $a$ and $b$ are non-distinguished nodes, and having seven
edges (including three loops), as in~Figure~\ref{fig:5-point}. The topology
$\tau $ we are going to consider is generated by the subbasis composed
of the three sides of the triangle: $\{x,z\}$, $\{y,z\}$,
$\{x,a,y\}$, and of the whole space~$T$.
Thus
\begin{equation*}
\tau =2^{\{x,y,z\}}\cup \bigl\{\{x,a,y\},\{x,a,y,z\}\bigr\}\cup \{T
\},
\end{equation*}
which, among others, means that there are eleven open sets in
$\tau $. Let us call \emph{distinguished} those sets all of whose elements belong to $\{x,y,z\}$. We ask the reader to observe that any open set in $T$ is composed
of points that are located on a path that begins with a distinguished node and ends
with a distinguished node (possibly the same). Clearly, $b$ is not an element of
any open proper subset of $T$, so the space is connected, and
$\emptyset $ and $T$ are its only clopen subsets. The two open proper subsets
of $T$ that are not distinguished are connected subspaces of $T$. $T$ is clearly
$T_{0}$ (but not $T_{1}$, due to $b$ again).

Let us determine the family $\ROvtex (\tau )$ of the regular open subsets of
$T$. Firstly, the operation of closure adds only non-distinguished nodes to distinguished singletons,
and so every such singleton is regular open. The closure of
$\{x,z\}$ consumes the two non-distinguished nodes, so its interior is again
$\{x,y\}$. Symmetrically, $\{z,y\}$ is regular open too. For
$\{x,a,y\}$, $b$ is its only limit point beyond the set, so again
$\{x,a,y\}$ is regular open. Thus, we have identified six elements of
$\ROvtex (\tau )$, and since the algebra cannot have more than eight elements (recall
that all in all there are eleven open sets) we have that
\begin{equation*}
\ROvtex (\tau )=\bigl\{\emptyset ,\{x\},\{y\},\{z\},\{x,z\},\{y,z\},\{x,a,y\},T
\bigr\}.
\end{equation*}

\begin{figure}
\centering
\begin{tikzpicture}
\newcommand{\cyan}[1]{\textcolor{cyan}{#1}}
 \node (0) at (0,0) {\cyan{$\emptyset$}};
    \node (x) at (-2,1.5) {\cyan{$\{x\}$}};
    \node (y) at (0,1.5) {\cyan{$\{y\}$}};
    \node (z) at (2,1.5) {\cyan{$\{z\}$}};
    \node (xy) at (-2,3) {$\{x,y\}$};
    \node (xz) at (0,3) {\cyan{$\{x,z\}$}};
    \node (yz) at (2,3) {\cyan{$\{y,z\}$}};
    \node (xay) at (-2,4.5) {\cyan{$\{x,a,y\}$}};
    \node (xayz) at (0,6) {$\{x,a,y,z\}$};
    \node (xyz) at (0,4.5) {$\{x,y,z\}$};
    \node (1) at (0,7.5) {\cyan{$\{x,y,z,a,b\}$}};
    \draw (0) -- (x) (0) -- (z) (0) -- (y) (x) -- (xy) (y) -- (xy) (x) -- (xz) (y) -- (yz) (z) -- (xz) (z) -- (yz)
    (xy) -- (xay) (xy) -- (xyz) (xay) -- (xayz) (xz) -- (xyz) (yz) -- (xyz) -- (xayz) -- (1);
    \begin{scope}[xshift=7.5cm]
  \node (0) at (0,0) {$\emptyset$};
  \node (xab) at (-2,1.5) {$\{x,a,b\}$};
  \node (yab) at (0,1.5) {$\{y,a,b\}$};
  \node (zb) at (2,1.5) {$\{z,b\}$};
  \node (xayb) at (-2,3) {$\{x,a,y,b\}$};
  \node (xzab) at (0,3) {$\{x,z,a,b\}$};
  \node (yzab) at (2,3) {$\{y,z,a,b\}$};
  \node (1) at (0,4.5) {$\{x,y,z,a,b\}$};
  \draw (0) -- (xab) (0) -- (yab) (0) -- (zb) (xab) -- (xayb) (xab) -- (xzab) (yab) -- (yzab)
            (yab) -- (xayb) (zb) -- (xzab) (zb) -- (yzab) (xayb) -- (1) (xzab) -- (1) (yzab) -- (1);
    \end{scope}
\end{tikzpicture}
\caption{On the left, we have the frame of open subsets of $T$ with its
regular elements being: the zero and the unity, all the singletons, $\{x,z\}$, $\{y,z\}$, and $\{x,a,y\}$; on the right, we have the Boolean algebra of regular
closed subsets of this space.}%
\label{fig:opens-of-T}
\end{figure}
\noindent In consequence
\begin{equation*}
\RCvtex (\tau )=\bigl\{\emptyset ,\{x,a,b\},\{y,a,b\},\{z,b\},\{x,z,a,b\},
\{y,z,a,b \},\{x,a,y,b\}, T\bigr\}.
\end{equation*}
Observe that any pair of nonempty sets in $\RCvtex (\tau )$ shares the point~$b$,
which means that the topological contact on the algebra is its maximal
contact relation, and the space $T$ is ultraconnected
(see Steen and Seebach~\cite[p.\,29]{Steen-Seebach-CT}). Figure~\ref{fig:opens-of-T} presents diagrammatically
both the frame of opens\footnote{Recall that in point-free
topology, the frame of opens of a topological space is its family of open
sets viewed as an algebraic structure. Basically, it is a complete Heyting
algebra (see Johnstone \cite{Johnstone-PPT}).} of $T$ and the Boolean algebra
$\RCvtex (\tau )$.

The regular closed set $R\coloneqq \{x,y,a,b\}$ (see Figure~\ref{fig:C-is-IC})
is interior-connected, as its interior $\{x,a,y\}$---being an open set
that has a non-distinguished point---is a connected subspace of~$T$.

\begin{figure}
\centering
\begin{tikzpicture}
\coordinate (a1) at (-.5,-.8);
\coordinate (a11) at (-.9,0.5);
\coordinate (a2) at (-.3,1);
\coordinate (a3) at (-.5,2.3);
\coordinate (a4) at (1,1.3);
\coordinate (a41) at (2.2,2.2);
\coordinate (a5) at (3,1.3);
\coordinate (a6) at (4.8,0);
\coordinate (a7) at (2,-0.8);
\fill[cyan,opacity=0.5,use Hobby shortcut] ([out angle=190]a1) .. (a11) .. ([in angle=180]a2) ..  (a4) .. (a41) .. ([in angle=0]a5) .. (a6) .. ([in angle=-10,out angle=180]a7) .. ([in angle=0]a1);
\node [left=5pt] at (a11) {$R$};
\draw [rounded corners,dashed] (-.5,-.6) rectangle (4.5,.5);
    \coordinate (x) at (0, 0);
    \coordinate (y) at (4, 0);
    \coordinate (z) at ($(x)!0.5!(y) + (0, {2 * sqrt(3)})$);
    \coordinate (yz) at ($(y)!0.5!(z)$);

    \coordinate (a) at ($(x)!0.5!(y)$); 

    \coordinate (Center) at ($(x)!2/3!(yz)$);

    \node[circle, fill=red,inner sep=2pt] at (x) {};
    \node[below=2pt] at (x) {$x$};
    \node[circle, fill=red,inner sep=2pt] at (y) {};
    \node[below=2pt] at (y) {$y$};
    \node[circle, fill=red,inner sep=2pt] at (z) {};
    \node[left=2pt] at (z) {$z$};

    \node[circle, fill,inner sep=2pt] at (a) {};

    \node[circle, fill,inner sep=2pt] at (Center) {};

    \node[below=2pt] at (a) {$a$};
    \node[above=2pt] at (Center) {$b$};


    \node[inner sep=5pt] (fakex) at (x) {};
    \node[inner sep=5pt] (fakey) at (y) {};
    \node[inner sep=5pt] (fakez) at (z) {};
    \node[inner sep=5pt] (fakea) at (a) {};

    \draw [red] (fakex) -- (fakea) -- (fakey) -- (fakez) -- (fakex);

 \path (fakex) edge [loop left,red] (fakex) (fakey) edge [loop right,red] (fakey) (fakez) edge [loop above,red] (fakez);


\end{tikzpicture}
\caption{$R$ is an interior-connected element of $\RCvtex (\tau )$.}%
\label{fig:C-is-IC}
\end{figure}

In the next step, we modify the space by deleting the non-distinguished point
$a$. Let $  \langle T\setminus \{a\},\tau '  \rangle $ be a subspace
of~$T$.

It is obvious that for any set $S\in \RCvtex (\tau )$,
$S\setminus \{a\}\in \RCvtex (\tau ')$. Moreover, this transformation does
not affect the point $b$, and so $\RCvtex (\tau ')$ has the maximal contact
relation. Thus the mapping $f\colon \RCvtex (\tau )\to \RCvtex (\tau ')$ such that
$f(S)\coloneqq S\setminus \{a\}$ is a BCA-isomorphism. Observe that
for $R$, $f(R)=R\setminus \{a\}$ is not interior-connected. Indeed,
$\Int _{\tau '}f(R)=\{x,y\}$, and it is a discrete subspace of
$T\setminus \{a\}$ (see Figures~\ref{fig:f(C)-is-not-IC} and \ref{fig:f-via-pic}).

Since we have the maximal contact relation in both cases, Ivanova proves
something a bit stronger, that is, the following theorem.
%
\begin{theorem}
\label{thm2.1}
The property of \emph{interior-connectedness} is not definable even in the
presence of the connectedness axiom for the contact relation
\begin{equation*}
\bigl(\forall x\in B\setminus \{\mathbf{0},\mathbf{1}\}\bigr)\,x\mathrel{
\mathsf{C}}-x.\footnote{Any BCA satisfies this axiom if and only if its corresponding
topological space is connected, hence its name (see Bennett and D\"{u}ntsch
 \cite{Bennett-Duntsch-AAT}).}
\end{equation*}
\end{theorem}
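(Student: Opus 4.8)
The plan is to read off the conclusion from Padoa's method, for which every ingredient has already been assembled in the construction above. Write $\frB_1\defeq\klam{\RC(\tau),\con}$ and $\frB_2\defeq\klam{\RC(\tau'),\con}$ for the two BCAs equipped with their topological contacts, and recall that $f\colon\RC(\tau)\to\RC(\tau')$ given by $f(B)\defeq B\setminus\{a\}$ is an isomorphism of BCAs. The element that pries the two algebras apart with respect to the target property is $R$: it is internally connected in $\frB_1$, while $f(R)$ fails to be internally connected in $\frB_2$.

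First I would make the reduction underlying Padoa's method explicit. Suppose, towards a contradiction, that internal connectedness were definable from contact, i.e., that there were a formula $\varphi(x)$ in the first-order language with the Boolean operations and $\con$ such that, in every BCA, $\varphi(u)$ holds exactly when $u$ is internally connected. Since $f$ is an isomorphism of BCAs, it preserves satisfaction of every such formula, so $\frB_1\vDash\varphi(R)$ would be equivalent to $\frB_2\vDash\varphi(f(R))$. But $R$ is internally connected whereas $f(R)$ is not, contradicting this equivalence; hence no defining formula can exist.

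It then remains to secure the strengthening \emph{even in the presence of the connectedness axiom}. The point is that both contacts are maximal: every non-empty regular closed subset of $T$, and---since the point $b$ is untouched by the deletion of $a$---of $T\setminus\{a\}$ as well, contains $b$, so any two non-zero elements are in contact. In a maximal contact relation on a non-degenerate Boolean algebra, $x\con-x$ holds precisely when $x\neq\zero$ and $-x\neq\zero$, that is, exactly for $x\in B\setminus\{\zero,\one\}$; thus both $\frB_1$ and $\frB_2$ validate the connectedness axiom. The two witnessing models therefore already lie in the subclass of connected BCAs, and running the Padoa argument inside this subclass yields non-definability even there.

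The only genuine verification on which the whole argument rests---and hence the step I would treat most carefully---is the claim that $f$ really is an isomorphism of BCAs, since Padoa's method needs it in full. Preservation of $\con$ is immediate from maximality: $f$ is a bijection fixing $\emptyset$, so it carries non-zero elements to non-zero elements in both directions and transports the maximal contact of $\frB_1$ onto that of $\frB_2$. What must be confirmed by direct bookkeeping is that $B\mapsto B\setminus\{a\}$ respects the Boolean operations of the two eight-element algebras---joins, meets, and complements---which one checks against the explicit enumerations of $\RC(\tau)$ and $\RC(\tau')$; this is routine, but it is the structural fact that licenses the entire Padoa reduction.
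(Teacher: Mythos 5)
Your proposal is correct and follows essentially the same route as the paper: the paper's justification for this theorem is precisely the preceding construction (the isomorphism $f(B)=B\setminus\{a\}$ between the two eight-element regular closed algebras, the element $R$ internally connected in $\RC(\tau)$ with $f(R)$ not internally connected in $\RC(\tau')$, and the observation that both contacts are maximal and hence satisfy the connectedness axiom), with Padoa's method supplying the non-definability conclusion. Your write-up merely makes the Padoa reduction and the maximal-contact verification explicit, which matches the paper's intent.
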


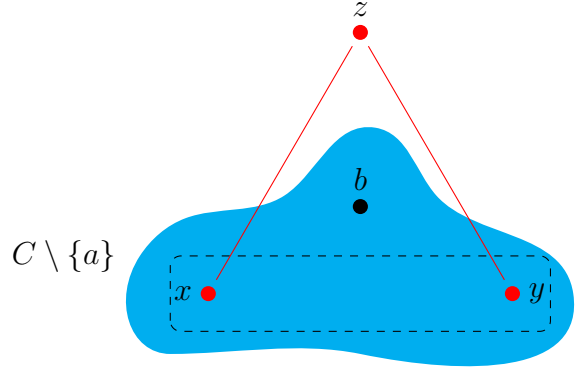
\begin{figure}
\begin{tikzpicture}
\coordinate (a1) at (-.5,-.8);
\coordinate (a11) at (-.9,0.5);
\coordinate (a2) at (-.3,1);
\coordinate (a3) at (-.5,2.3);
\coordinate (a4) at (1,1.3);
\coordinate (a41) at (2.2,2.2);
\coordinate (a5) at (3,1.3);
\coordinate (a6) at (4.8,0);
\coordinate (a7) at (2,-0.8);
\fill[cyan,opacity=0.5,use Hobby shortcut] ([out angle=190]a1) .. (a11) .. ([in angle=180]a2) ..  (a4) .. (a41) .. ([in angle=0]a5) .. (a6) .. ([in angle=-10,out angle=180]a7) .. ([in angle=0]a1);
\node [left=5pt] at (a11) {$R\setminus\{a\}$};
\draw [rounded corners,dashed] (-.5,-.6) rectangle (4.5,.5);
    \coordinate (x) at (0, 0);
    \coordinate (y) at (4, 0);
    \coordinate (z) at ($(x)!0.5!(y) + (0, {2 * sqrt(3)})$);

    \coordinate (a) at ($(x)!0.5!(y)$); 

    \coordinate (Center) at ($(x)!2/3!(yz)$);

    \node[circle, fill=red,inner sep=2pt] at (x) {};
    \node[below=2pt] at (x) {$x$};
    \node[circle, fill=red,inner sep=2pt] at (y) {};
    \node[below=2pt] at (y) {$y$};
    \node[circle, fill=red,inner sep=2pt] at (z) {};
    \node[left=2pt] at (z) {$z$};

    \node[circle, fill,inner sep=2pt] at (Center) {};

    \node[above=2pt] at (Center) {$b$};


    \node[inner sep=5pt] (fakex) at (x) {};
    \node[inner sep=5pt] (fakey) at (y) {};
    \node[inner sep=5pt] (fakez) at (z) {};
    \node[inner sep=5pt] (fakea) at (a) {};

    \draw [red] (fakex) -- (fakez) -- (fakey) -- (fakex);

 \path (fakex) edge [loop left,red] (fakex) (fakey) edge [loop right,red] (fakey) (fakez) edge [loop above,red] (fakez);

\end{tikzpicture}
\caption{$\Int _{\tau '}(R\setminus \{a\})$ is a discrete subspace of
$T\setminus \{a\}$, so $R\setminus \{a\}$ cannot be interior-connected.}%
\label{fig:f(C)-is-not-IC}
\end{figure}
Observe as well that since the algebras used in the proof are finite, restricting
the class to complete BCAs or even to complete BCAs satisfying the strong
version of \eqref{C4} from Gruszczy\'{n}ski and Mench\'{o}n \cite{Gruszczynski-Menchon-FCRTMOAB}
\begin{equation*}
x\mathrel{\mathsf{C}}
\bigvee _{i\in I}x_{i}\rightarrow (\exists i\in I)
\,x\mathrel{\mathsf{C}}x_{i}
\end{equation*}
has no influence on the status of definability of the interior-connectedness
property.

\begin{figure}
\centering
\begin{tikzpicture}
\newcommand{\redx}{\textcolor{red}{x}}
\newcommand{\redy}{\textcolor{red}{y}}
\newcommand{\redz}{\textcolor{red}{z}}
    \coordinate (ellipse) at (-2,3);
    \fill [cyan,opacity=0.5] (ellipse.center) ellipse (1cm and 0.4cm);
  \node (0) at (0,0) {$\emptyset$};
  \node (xab) at (-2,1.5) {$\{\redx,a,b\}$};
  \node (yab) at (0,1.5) {$\{\redy,a,b\}$};
  \node (zb) at (2,1.5) {$\{\redz,b\}$};
  \node (xayb) at (-2,3) {$\{\redx,a,\redy,b\}$};
  \node (xzab) at (0,3) {$\{\redx,\redz,a,b\}$};
  \node (yzab) at (2,3) {$\{\redy,\redz,a,b\}$};
  \node (1) at (0,4.5) {$\{\redx,\redy,\redz,a,b\}$};
  \draw (0) -- (xab) (0) -- (yab) (0) -- (zb) (xab) -- (xayb) (xab) -- (xzab) (yab) -- (yzab)
            (yab) -- (xayb) (zb) -- (xzab) (zb) -- (yzab) (xayb) -- (1) (xzab) -- (1) (yzab) -- (1);
    \coordinate (startf) at ($(yzab)!0.5!(1)$); 
    \begin{pgfonlayer}{background} 
     \fill [yellow,opacity=0.5] (xab.south east) ++(-15pt, 0pt) rectangle +(-22pt, 15pt);
    \fill [yellow,opacity=0.5] (yab.south east) ++(-15pt, 0pt) rectangle +(-22pt, 15pt);
    \end{pgfonlayer}{background}
\begin{scope}[xshift=7.5cm]
    \coordinate (ellipse) at (-2,3);
    \fill [cyan,opacity=0.5] (ellipse.center) ellipse (1cm and 0.4cm);
    \node (0) at (0,0) {$\emptyset$};
    \node (xb) at (-2,1.5) {$\{\redx,b\}$};
    \node (yb) at (0,1.5) {$\{\redy,b\}$};
    \node (zb) at (2,1.5) {$\{\redz,b\}$};
    \node (xyb) at (-2,3) {$\{\redx,\redy,b\}$};
    \node (xzb) at (0,3) {$\{\redx,\redz,b\}$};
    \node (yzb) at (2,3) {$\{\redy,\redz,b\}$};
    \node (1) at (0,4.5) {$\{\redx,\redy,\redz,b\}$};
    \draw (0) -- (xb) (0) -- (yb) (0) -- (zb) (xb) -- (xyb) (xb) -- (xzb) (yb) -- (yzb) (yb) -- (xyb) (zb) -- (xzb) (zb) -- (yzb) (xyb) -- (1) (xzb) -- (1) (yzb) -- (1);
    \coordinate (endf) at ($(xyb)!0.5!(1)$); 
    \begin{pgfonlayer}{background} 
    \fill [yellow,opacity=0.5] (xb.south east) ++(-15pt, 0pt) rectangle +(-13pt, 15pt);
    \fill [yellow,opacity=0.5] (yb.south east) ++(-15pt, 0pt) rectangle +(-13pt, 15pt);
    \end{pgfonlayer}{background}
    \end{scope}

    \draw [->,shorten <=15pt, shorten >=15pt,gray] (startf) to [out=45,in=135] node [above=3pt] {$f$} (endf);
\end{tikzpicture}
\caption{A pictorial presentation of the isomorphism
$f\colon \RCvtex (\tau )\to \RCvtex (\tau ')$. The highlighted rectangular chunks of regions form
interiors of the elements enclosed by an ellipse.}%
\label{fig:f-via-pic}
\end{figure}

\section{The Number of Elements of the Algebra}
\label{sec3}

Let $  \langle X_{1},\tau _{1}  \rangle $ and
$  \langle X_{2},\tau _{2}  \rangle $ be topological spaces, and
let $f\colon \RCvtex (\tau _{1})\to \RCvtex (\tau _{2})$ be a BCA-isomorphism.
We say that $f$ \emph{preserves interior-connectedness} just in case, for
all $b \in \RCvtex (\tau _{1})$, $b$ has a connected interior in $X_{1}$ if and only if
$f(b)$ has a connected interior in $X_{2}$.

We will show that the proof of Ivanova is the simplest possible in the
following sense.
%
\begin{sloppypar}
\begin{theorem}%
\label{th:8-elements}
Let $  \langle X_{1},\tau _{1}  \rangle $ and
$  \langle X_{2},\tau _{2}  \rangle $ be topological spaces. If
$f$ is a BCA-isomorphism of the contact algebras
$\mathfrak{B}_1\coloneqq  \langle \RCvtex (\tau _{1}),\mathord{\mathrel{\mathsf{C}}_{\tau _{1}}}
  \rangle $ and
$\mathfrak{B}_2\coloneqq  \langle \RCvtex (\tau _{2}),\mathord{\mathrel{\mathsf{C}}_{\tau _{2}}}
  \rangle $, but $f$ does not preserve interior-connectedness, then
both algebras must have at least eight elements.
\end{theorem}
\end{sloppypar}
\begin{proof}
The algebras cannot have two elements each since, in this case, their unities
are the whole topological spaces $X_{1}$ and $X_{2}$. For the whole space,
interior-connectedness coincides with mere connectedness. So, it would
have to be the case that $X_{1}$ is connected and $X_{2}$ is not (or vice
versa). But in such a situation, $X_{2}$ must be decomposable into two
clopen nonempty sets, and so $\mathfrak{B}_{2}$ must have at least four
elements (as every clopen set is regular closed).

Suppose that there exist two isomorphic contact algebras
$\langle \RCvtex (\tau _{1}),\mathord{\mathrel{\mathsf{C}}_{\tau _{1}}}
  \rangle $ and
$\langle \RCvtex (\tau _{2}),\mathord{\mathrel{\mathsf{C}}_{\tau _{2}}}
  \rangle $ with only four elements that
are isomorphic and for which there is an BCA-isomorphism
$f\colon\RCvtex (\tau _{1}) \to \RCvtex (\tau _{2})$ such that there exists a set $A \in \RCvtex (\tau _{1})$ which
is interior-connected in $X_{1}$, but
$f(A)$ (the image of $A$ with respect to $f$) is not interior-connected
in $X_{2}$, that is,
$\Int _{\tau _{2}}f(A)$ is a disconnected subspace of $X_{2}$.
Thus, there exist two nonempty open sets $D'$, $E'$ of the subspace
$\Int _{\tau _{2}} f(A)$ such that
\begin{equation*}
\Int _{\tau _{2}} f(A) = D' \cup E' \qquad \text{and} \qquad D' \cap E'
= \emptyset .
\end{equation*}
Since $\Int _{\tau _{2}} f(A)$ is open in $X_{2}$, $D'$ and $E'$ are also
open in $X_{2}$. It follows that
\begin{equation*}
f(A) = \overline{\Int _{\tau _{2}}f(A)} = \overline{D' \cup E'} =
\overline{D'} \cup \overline{E'},
\end{equation*}
where the closure operation is the operation of the space $X_{2}$. Being
the closures of nonempty open sets, $\overline{D'}$ and
$\overline{E'}$ are nonempty regular closed sets. Since
$D' \cap E' = \emptyset $ and $D'$ and $E'$ are open sets, it follows that
$\overline{D'} \neq \overline{E'}$. As $\mathfrak{B}_{2}$ has only four
elements, it must be the case that $f(A)=X_{2}$, and so ($\dagger $)
$A=X_{1}$.

Observe that $\overline{D'} \cap \overline{E'} = \emptyset $. Suppose toward
a contradiction that there exists
$x \in \overline{D'} \cap \overline{E'}$. Since $X_{2}$ is the union
$D' \cup E'$, $x$ is in one of these sets, and without loss of generality,
we can assume that $x \in D'$. Since $D'$ is an open set and
$x \in \overline{E'}$, it follows that $D' \cap E' \neq \emptyset $, a~contradiction.
Therefore, $\overline{D'}$ is not in contact with $\overline{E'}$.

Let $D, E \in \RCvtex (\tau _{1})$ be such that $f(D) = \overline{D'}$ and
$f(E) = \overline{E'}$. By assumption, $D$ is not in contact with
$E$, that is, $D \cap E = \emptyset $. The algebra $\RCvtex (\tau _{1})$ has
only four elements, so $X_{1}=D\cup E$, which means that $X_{1}$ is disconnected,
and so its interior is disconnected, too (since this is the whole space).
But this contradicts $(\dagger )$.
\end{proof}

\section{The Number of Points}
\label{sec4}

In this section, we will prove the following minimality theorem.
%
\begin{theorem}%
\label{th:minimality-theorem}
Let $  \langle X_{1},\tau _{1}  \rangle $ and
$  \langle X_{2},\tau _{2}  \rangle $ be topological spaces. If
$f$ is a BCA-isomorphism of the algebras
$\mathfrak{B}_{1}\coloneqq   \langle \RCvtex (\tau _{1}),\mathord{
\mathrel{\mathsf{C}}_{\tau _{1}}}  \rangle $ and
$\mathfrak{B}_{2}\coloneqq   \langle \RCvtex (\tau _{2}),\mathord{
\mathrel{\mathsf{C}}_{\tau _{2}}}  \rangle $, but $f$ does not preserve
interior-connectedness, then the minimal number of points of the underlying
spaces is five for one of the spaces and four for the other.
\end{theorem}
 In this way, we will show that Ivanova's construction reduced to a five-point
space is the minimal possible construction, not only in the sense of the
cardinality of algebras but also in the sense of the cardinality of sets
of points of the underlying spaces. The following lemma that stems from
Theorem~\ref{th:8-elements} will be crucial.
%
\begin{lemma}%
\label{lem:4-points}
If $\mathfrak{B}_{1}$ and $\mathfrak{B}_{2}$ and $f$ are like in Theorem~\ref{th:minimality-theorem},
then each of the underlying topological spaces must have at least four
points.
\end{lemma}
\begin{proof}
Assume we have isomorphic BCAs $\mathfrak{B}_{1}$ and
$\mathfrak{B}_{2}$ that have different properties of interior-connectedness.
Let $f\colon \mathfrak{B}_{1}\to \mathfrak{B}_{2}$ be their BCA-isomorphism.
Suppose the first is the algebra of regular closed sets of a space
$\langle X_{1},\tau _{1}\rangle $ whose domain has precisely three points:
$\mathfrak{B}_{1}\coloneqq   \langle \RCvtex (\tau _{1}),\mathrel{
\mathsf{C}}_{1}  \rangle $. In light of Theorem~\ref{th:8-elements},
it must be the case that $\RCvtex (\tau _{1})=2^{X_{1}}$, so all subsets of
$X_{1}$ are closed, and $X_{1}$ is a discrete space. This also means that
$\RCvtex (\tau _{1})$ has the minimal contact relation. In this case, the only
nonempty interior-connected subsets of $X_{1}$ are the singletons.

Suppose $\langle X_{2},\tau _{2}\rangle $ is a topological space whose
regular closed algebra
$\mathfrak{B}_{2}\coloneqq   \langle \RCvtex (\tau _{2}),\mathord{
\mathrel{\mathsf{C}}_{2}}  \rangle $ is isomorphic to
$\mathfrak{B}_1$, that is, $\RCvtex (\tau _{2})$ has exactly eight elements.
Since $f$ is an isomorphism of contact algebras, we have that
\begin{equation*}
A\cap D\neq \emptyset \qquad \text{iff}\qquad f(A)\mathrel{
\mathsf{C}}_{2} f(D).
\end{equation*}
Let $\cdot $ be the operation of meet of $\RCvtex (\tau _{2})$. Then we have
that $f(A)\mathrel{\mathsf{C}}_{2} f(D)$ entails that
$A\cap D\neq \emptyset $, so $f(A)\cdot f(D)\neq \emptyset $, as $f$ is
a Boolean homomorphism. This implies that the contact relation of
$\RCvtex (\tau _{2})$ must also be minimal. But this means that
$\RCvtex (\tau _{2})$ coincides with the algebra of clopen subsets of the space
$X_{2}$. So, its atoms are the only connected components of the space that
are also interior-connected. Therefore, $\mathfrak{B}_{1}$ and
$\mathfrak{B}_{2}$ have the same notions of interior-connectedness, which
is a contradiction. So, both spaces must have at least four points each.
This ends the proof.
\end{proof}

\begin{sloppypar}
We can see that the constructions so far are almost minimal, as we have
a BCA-isomorphism between two eight-element regular closed algebras, where
the first one is based on the space with five points, and the second on
the space of four points. We will show that we cannot reduce the cardinality
of the first space to four points, and in this way we will prove Theorem~\ref{th:minimality-theorem}.
\end{sloppypar}

Firstly, let us observe that in the case we begin with the regular closed
algebra of a~four-point space $\langle X,\tau \rangle $ we cannot consider
the regular contact algebra of sixteen elements. This is because in such a
situation we would have that $\RCvtex (\tau )=2^{X}$, the space is discrete,
and the only nonempty interior-connected sets are the singletons. So any
isomorphic algebra of another four-point space would also have to be the
whole power set algebra, and would have the same topological properties
as $\RCvtex (\tau )$. Thus, we may focus on eight-element algebras of four-point
spaces.

If we want to construct a contact relation on the algebra of eight elements,
we have three possibilities for the atoms:
\begin{enumerate}
\item[1.] all three atoms are pairwise in contact,
\item[2.] one pair of atoms is in contact,
\item[3.] contact is empty on atoms.
\end{enumerate}
We will consider the four-element set $S\coloneqq \{p,q,r,s\}$ and we analyze the three above-mentioned cases on $S$ as the domain of spaces. Since the algebras we have in mind are regular closed algebras, the contact must be defined by set-theoretical intersection.

Observe that it indeed cannot be the case that exactly two pairs of atoms are in contact, since then there would have to be a pair of atoms that add up to the whole space, as in~Figure~\ref{fig:impossible-atoms}, which is impossible.

In the first case, the only possibility
for three atoms to be pairwise in topological contact is that they have
a point in common; let it be $s$, as in Figure~\ref{fig:with-Heyting}.
In the same figure, the diagram on the right represents the frame structure
of the topology $\tau $ determined by the algebra. From it, we can see
that $  \langle S,\tau   \rangle $ is---up to homeomorphism---precisely
the space $T\setminus \{a\}$ in Figure~\ref{fig:f(C)-is-not-IC}, whose
only nonempty interior-connected subsets are atoms and the unity. By Lemma~\ref{lem:4-points},
the isomorphic image of $\RCvtex (\tau )$---if we limit ourselves to only at
most four-point spaces---must have the maximal contact, and so the contact
must hold among all pairs of atoms. Then, it will again be homeomorphic
to $T\setminus \{a\}$, and so will have no different interior-connected
regular closed sets, and thus there is no way to map any of the interior-connected
sets to a set that does not have the property. In consequence, in the case
of all three atoms in contact, at least one of the spaces involved must
have at least five points.

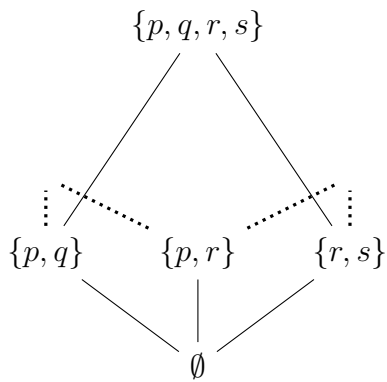
\begin{figure}
    \centering
\begin{tikzpicture}
    \node (0) at (0,0) {$\emptyset$};
  \node (pq) at (-2,1.5) {$\{p,q\}$};
  \node (pr) at (0,1.5) {$\{p,r\}$};
  \node (rs) at (2,1.5) {$\{r,s\}$};
  \node (1) at (0,4.5) {$\{p,q,r,s\}$};
  \coordinate (apq) at (-2,2.5);
  \coordinate (apr) at (0,2.5);
  \coordinate (ars) at (2,2.5);
  \draw (0) -- (pq) (0) -- (pr) (0) -- (rs);
  \draw [dotted,very thick,shorten >=5pt] (pq) -- (apq);
  \draw [dotted,very thick,shorten >=5pt] (pr) -- (apq);
  \draw [dotted,very thick,shorten >=5pt] (pr) -- (ars);
  \draw [dotted,very thick,shorten >=5pt] (rs) -- (ars);
  \draw (pq) -- (1) (rs) -- (1);
\end{tikzpicture}
\caption{An impossible configuration of atoms on the four-point space
$S$: atoms $\{p,q\}$ and $\{r,s\}$ add up to $S$.}%
\label{fig:impossible-atoms}
\end{figure}

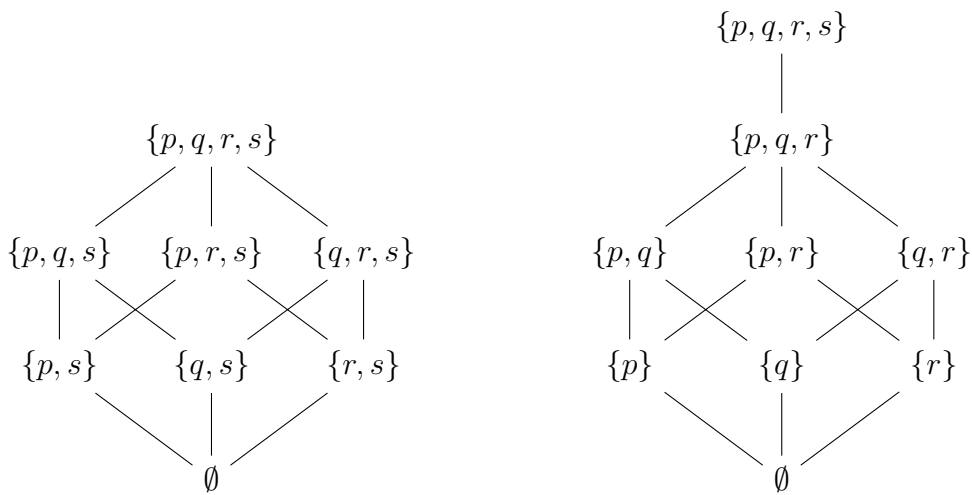
\begin{figure}
\centering
\begin{tikzpicture}
 \node (0) at (0,0) {$\emptyset$};
    \node (ps) at (-2,1.5) {$\{p,s\}$};
    \node (qs) at (0,1.5) {$\{q,s\}$};
    \node (rs) at (2,1.5) {$\{r,s\}$};
    \node (pqs) at (-2,3) {$\{p,q,s\}$};
    \node (prs) at (0,3) {$\{p,r,s\}$};
    \node (qrs) at (2,3) {$\{q,r,s\}$};
    \node (1) at (0,4.5) {$\{p,q,r,s\}$};
    \draw (0) -- (ps) (0) -- (qs) (0) -- (rs) (ps) -- (pqs) (ps) -- (prs) (qs) -- (qrs) (qs) -- (pqs) (rs) -- (prs) (rs) -- (qrs) (pqs) -- (1) (prs) -- (1) (qrs) -- (1);
\begin{scope}[xshift=7.5cm]
\node (0) at (0,0) {$\emptyset$};
\node (p) at (-2,1.5) {$\{p\}$};
\node (q) at (0,1.5) {$\{q\}$};
\node (r) at (2,1.5) {$\{r\}$};
\node (pq) at (-2,3) {$\{p,q\}$};
\node (pr) at (0,3) {$\{p,r\}$};
\node (qr) at (2,3) {$\{q,r\}$};
\node (pqr) at (0,4.5) {$\{p,q,r\}$};
\node (1) at (0,6) {$\{p,q,r,s\}$};
\draw (0) -- (p) (0) -- (q) (0) -- (r) (p) -- (pq) (p) -- (pr) (q) -- (pq) (q) -- (qr) (r) -- (pr) (r) -- (qr) (pq) -- (pqr) (pr) -- (pqr) -- (1) (qr) -- (pqr);
\end{scope}
\end{tikzpicture}
\caption{The eight-element regular closed algebra with full contact and
determined by it the frame of open subsets of $S$. We can see that
$S$ is---up to homeomorphism---the space $T\setminus \{a\}$ in Figure~\ref{fig:f(C)-is-not-IC}.}%
\label{fig:with-Heyting}
\end{figure}

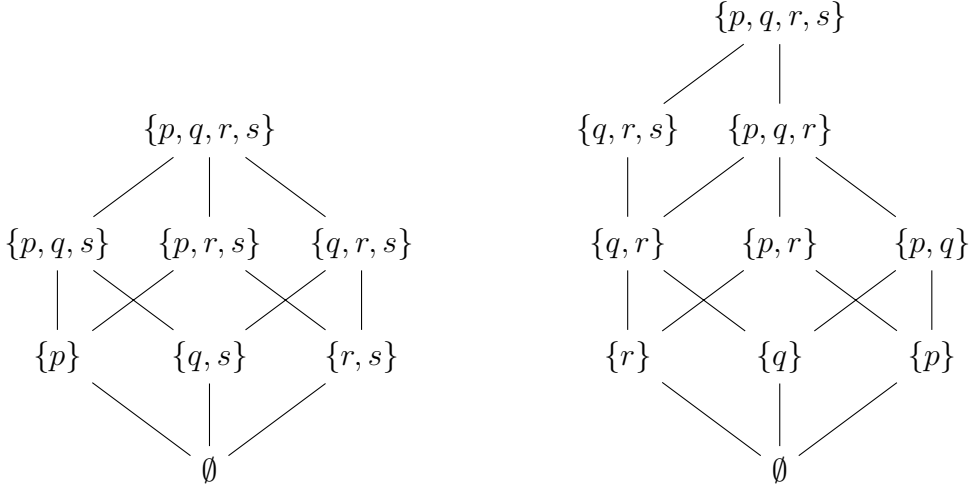
\begin{figure}
\centering
\begin{tikzpicture}
 \node (0) at (0,0) {$\emptyset$};
    \node (p) at (-2,1.5) {$\{p\}$};
    \node (qs) at (0,1.5) {$\{q,s\}$};
    \node (rs) at (2,1.5) {$\{r,s\}$};
    \node (pqs) at (-2,3) {$\{p,q,s\}$};
    \node (prs) at (0,3) {$\{p,r,s\}$};
    \node (qrs) at (2,3) {$\{q,r,s\}$};
    \node (1) at (0,4.5) {$\{p,q,r,s\}$};
    \draw (0) -- (p) (0) -- (qs) (0) -- (rs) (p) -- (pqs) (p) -- (prs) (qs) -- (pqs) (qs) -- (qrs) (rs) -- (prs) (rs) -- (qrs) (pqs) -- (1) (prs) -- (1) (qrs) -- (1);
\begin{scope}[xshift=7.5cm]
\node (0) at (0,0) {$\emptyset$};
\node (r) at (-2,1.5) {$\{r\}$};
\node (q) at (0,1.5) {$\{q\}$};
\node (p) at (2,1.5) {$\{p\}$};
\node (qr) at (-2,3) {$\{q,r\}$};
\node (qrs) at (-2,4.5) {$\{q,r,s\}$};
\node (pr) at (0,3) {$\{p,r\}$};
\node (pq) at (2,3) {$\{p,q\}$};
\node (pqr) at (0,4.5) {$\{p,q,r\}$};
\node (1) at (0,6) {$\{p,q,r,s\}$};
\draw (0) -- (r) (0) -- (q) (0) -- (p) (p) -- (pq) (p) -- (pr) (q) -- (pq) (q) -- (qr) (r) -- (pr) (r) -- (qr) (qr) -- (qrs) (qr) -- (pqr) (pq) -- (pqr) (pr) -- (pqr) -- (1) (qrs) -- (1);
\end{scope}
\end{tikzpicture}
\caption{The regular closed algebra of $S$ with one pair of atoms in contact,
$\{q,s\}$ and $\{r,s\}$; and the frame of opens determined by the algebra.
We can see that $\{q,r,s\}$ is an interior-connected clopen set.}%
\label{fig:2-atoms-intersect}
\end{figure}

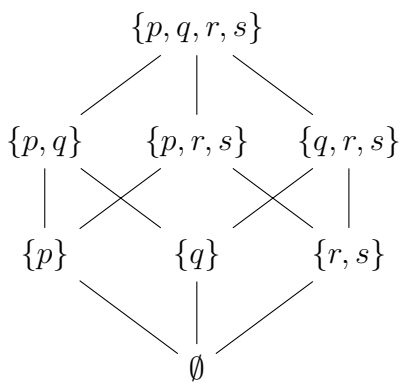
\begin{figure}
\centering
\begin{tikzpicture}
 \node (0) at (0,0) {$\emptyset$};
    \node (p) at (-2,1.5) {$\{p\}$};
    \node (q) at (0,1.5) {$\{q\}$};
    \node (rs) at (2,1.5) {$\{r,s\}$};
    \node (pq) at (-2,3) {$\{p,q\}$};
    \node (prs) at (0,3) {$\{p,r,s\}$};
    \node (qrs) at (2,3) {$\{q,r,s\}$};
    \node (1) at (0,4.5) {$\{p,q,r,s\}$};
    \draw (0) -- (p) (0) -- (q) (0) -- (rs) (p) -- (pq) (p) -- (prs) (qs) -- (pq) (qs) -- (qrs) (rs) -- (prs) (rs) -- (qrs) (pq) -- (1) (prs) -- (1) (qrs) -- (1);
\end{tikzpicture}
\caption{The regular closed algebra on the four-point set
$\{p,q,r,s\}$ with all atoms separated. Routine verification shows that
$\RCvtex (\tau )=\ROvtex (\tau )=\tau $, so this is the algebra of clopen subsets
of the space.}%
\label{fig:clopens}
\end{figure}

In the second case, where only one pair of atoms is in contact, that is, the situation
in Figure~\ref{fig:2-atoms-intersect} holds. Here, we have only one interior-connected
set that is neither the empty set, nor an atom, nor the whole space, that is,
$\{q,r,s\}$ (interestingly, this set is clopen).
As---up to isomorphism---this is the only regular contact algebra in which
only one pair of atoms is in contact, the only possibility to <<spoil>>
interior-connectedness of $\{q,r,s\}$ is via an automorphism. Since such
an automorphism must, in particular, preserve contact on atoms, the only
two possibilities are the identity relation or the function $f$ that swaps
$\{q,s\}$ for $\{r,s\}$ (and vice versa), and sends $\{p\}$ to
$\{p\}$. However, the second possibility cannot hold, since if we put
$f(\{q,s\})\coloneqq \{r,s\}$ and $f(\{r,s\})\coloneqq \{q,s\}$, in consequence
we must have $f(\{p,q,s\})=\{q,r,s\}$ and $f(\{q,r,s\})=\{p,q,s\}$. But
then such an $f$ fails to be an isomorphism, as
$\{p\}\subseteq \{p,r,s\}$ but $f(\{p\})\nsubseteq f(\{p,r,s\})$. So, the
identity mapping is the only option. Therefore, also in this case at least
one of the two algebras must have more than four points.

The last situation is when the contact relation is empty on the set of
atoms. Since the atoms must add up to the unity, in the case of four-point
space and eight-element algebra, the only possibility is that we have two
singletons, say $\{p\}$ and $\{q\}$, and one two-element set
$\{r,s\}$. In this case, we obtain the regular closed algebra in Figure~\ref{fig:clopens},
whose topological space has the same algebra of regular open sets, and
the same frame of opens, which is the algebra of clopen subsets of the
space. Up to isomorphism, it is the only regular closed eight-element algebra
so automorphisms are the only possibility. As no nontrivial set is interior-connected,
we will not show nondefinability of interior-connectedness using this
algebra.

In this way, we proved Theorem~\ref{th:minimality-theorem}.

\section*{Acknowledgments}

We would like to thank Ivo D\"{u}ntsch for reading various versions of
this paper and for stimulating discussions concerning the problems addressed
above. With his help, we were able to significantly improve the presentation.
We are also grateful to the anonymous referee whose comments and suggestions
contributed to improving this paper. Any remaining errors or shortcomings
are entirely our responsibility.

\begin{sloppypar}
This research was funded by the National Science Center (Poland), grant
number 2020/39/B/HS1/00216.
\end{sloppypar}





\begin{thebibliography}{10}

\bibitem[Bennett and D\"{u}ntsch, 2007]{Bennett-Duntsch-AAT}
Bennett, B., and I. D\"{u}ntsch,
\newblock ``Axioms, algebras and topology''
\newblock pp. 99--159 in \emph{Handbook of Spatial Logics}, edited by M. Aiello, I. Pratt-Hartmann, and J. Van~Benthem, Springer, Dordrecht, 2007.
\newblock DOI: \url{10.1007/978-1-4020-5587-4\_3}, MR2393887.


\bibitem[Beth, 1953]{Beth-OPMITTOF}
Beth, E.,
\newblock ``On Padoa's method in the theory of definition,''
\newblock \emph{Indagationes Mathematicae (Proceedings)}, vol. 56 (1953), pp. 330--39.
\newblock MR0058537.

\bibitem[Cohn et~al., 1997]{Cohn-et-al-QSRARQTRCC}
Cohn, A.~G., B. Bennett, J. Gooday, and N.~M. Gotts,
\newblock ``Qualitative spatial representation and reasoning with the region connection calculus,''
\newblock \emph{Geoinformatica}, vol. 1 (1997), no. 3, pp. 275--316.


\bibitem[Dimov and Vakarelov, 2006]{Dimov-et-al-CARBTSPA1}
Dimov, G. D., and D. Vakarelov,
\newblock ``Contact algebras and region-based theory of space: A proximity approach--I,''
\newblock \emph{Fundamenta Informaticae}, vol. 74 (2006), nos. 2--3, pp. 209--49.
\newblock DOI: \url{10.3233/fun-2006-742-303}, MR2284194.

\bibitem[D\"{u}ntsch and Winter, 2005]{Duntsch-et-al-RTBCA}
D\"{u}ntsch, I., and M. Winter,
\newblock ``A representation theorem for Boolean contact algebras,''
\newblock \emph{Theoretical Computer Science}, vol. 347 (2005), no. 3, pp. 498--512.
\newblock DOI: \url{10.1016/j.tcs.2005.06.030}, MR2187916.


\bibitem[Goldblatt and Grice, 2016]{Goldblatt-et-al-MADFMS}
Goldblatt, R., and M. Grice,
\newblock ``Mereocompactness and duality for mereotopological spaces,''
\newblock pp. 313--30 in \emph{J. Michael Dunn on Information Based Logics},
edited by K. Bimb\'o, Springer, Cham, 2016.
\newblock DOI: \url{10.1007/978-3-319-29300-4_15}, MR3526510.


\bibitem[Gruszczy\'{n}ski and Mench\'{o}n, 2023]{Gruszczynski-Menchon-FCRTMOAB}
Gruszczy\'{n}ski, R., and P. Mench\'{o}n,
\newblock ``From contact relations to modal operators, and back,''
\newblock \emph{Studia Logica}, vol. 111 (2023), no. 5, pp. 717--48.
\newblock DOI: \url{10.1007/s11225-023-10036-7}, MR4646444.

\bibitem[Hatcher, 2002]{Hatcher-AT}
Hatcher, A.,
\newblock \emph{Algebraic Topology},
\newblock Cambridge University Press, Cambridge, 2002.
MR1867354

\bibitem[Ivanova, 2020]{Ivanova-ECAATC}
Ivanova, T.,
\newblock ``Extended contact algebras and internal connectedness,''
\newblock \emph{Studia Logica}, vol. 108 (2020), no. 2, pp. 239--54.
\newblock DOI: \url{10.1007/s11225-019-09845-6}, MR4079271.


\bibitem[Johnstone, 1983]{Johnstone-PPT}
Johnstone, P.~T.,
\newblock ``The point of pointless topology,''
\newblock \emph{Bulletin (New Series) of the American Mathematical Society}, vol. 8 (1983), no. 1, pp. 41--53.
\newblock DOI: \url{10.1090/S0273-0979-1983-15080-2}, MR0682820.

\bibitem[Kontchakov et~al., 1998]{Kontchakov-et-al-TCAML}
Kontchakov, R., I. Pratt-Hartmann, F. Wolter, and M. Zakharyaschev,
\newblock ``Topology, connectedness, and modal logic''
\newblock pp. 151--76 in \emph{Advances in Modal Logic}, edited by M. Kracht, M. de~Rijke, H. Wansing, and M. Zakharyaschev, CSLI Publications, 1998.
\newblock MR2642643

\bibitem[Kontchakov et~al., 2010]{Kontchakov-et-al-SLWCP}
Kontchakov, R., I. Pratt-Hartmann, F. Wolter, and M. Zakharyaschev,
\newblock ``Spatial logics with connectedness predicates,''
\newblock \emph{Logical Methods in Computer Science}, vol. 6 (2010), no. 3.
\newblock DOI: \url{10.2168/LMCS-6(3:7)2010}, MR2679071.

\bibitem[Pratt and Lemon, 1997]{Pratt-Lemon-OPPM}
Pratt, I., and O. Lemon,
\newblock ``Ontologies for plane, polygonal mereotopology,''
\newblock \emph{Notre Dame Journal of Formal Logic}, vol. 38 (1997), no. 2, pp. 225--45.
\newblock DOI: \url{10.1305/ndjfl/1039724888}, MR1489411


\bibitem[Pratt and Schoop, 2000]{Pratt-Schoop-EIPPM}
Pratt, I., and D. Schoop,
\newblock ``Expressivity in polygonal, plane mereotopology,''
\newblock \emph{Journal of Symbolic Logic}, vol. 65 (2000), no. 2. pp. 822--38.
\newblock DOI: \url{10.2307/2586573}, MR1771089.


\bibitem[Pratt-Hartmann, 2001]{Pratt-Hartmann-EARIRBTOS}
Pratt-Hartmann, I.,
\newblock ``Empiricism and rationalism in region-based theories of space,''
\newblock \emph{Fundamenta Informaticae}, vol. 46 (2001), nos. 1--2, pp. 159--86.
\newblock MR2009806.

\bibitem[Steen and Seebach, 1995]{Steen-Seebach-CT}
Steen, L.~A., and J. A. Seebach,
\newblock \emph{Counterexamples in Topology}.
\newblock Dover Publications, New York, 1995.
\newblock MR1382863


\bibitem[Vakarelov, 2007]{Vakarelov-RBTS}
Vakarelov, D.,
\newblock ``Region-based theory of space: Algebras of regions, representation theory, and logics''
\newblock pp. 267--348 in \emph{Mathematical Problems from Applied Logic II: Logics for the XXIst Century}, edited by D.~M. Gabbay, M. Zakharyaschev, and S.~S. Goncharov, Springer, New York, 2007.
\end{thebibliography}
\end{document}